\def\tablenotes{\bgroup\parfillskip=0pt plus 1fil
\leftskip=0pt\relax \rightskip=0pt
\vskip2pt\footnotesize}
\def\endtablenotes{\vskip1pt\egroup}
\def\sphline{\noalign{\vskip3pt}\hline\noalign{\vskip3pt}}
\newtheorem{theorem}{Theorem}[section]
\newtheorem{corollary}[theorem]{Corollary}
\newtheorem{definition}[theorem]{Definition}
\begin{document}
\title{On the Computation of Eigenvalues of the Anharmonic Coulombic Potential}
\author{Tyler Cassidy, Philippe Gaudreau and Hassan Safouhi\footnote{Corresponding author. \newline The corresponding author acknowledges the financial support for this research by the Natural Sciences and Engineering Research Council of Canada~(NSERC) - Grant 250223-2011.}
\\
{\it Mathematical Section}\\
{\it Campus Saint-Jean, University of Alberta}\\
{\it 8406, 91 Street, Edmonton (AB), Canada T6C 4G9}
}

\date{}
\maketitle

{\bf AMS classification:} \hskip 0.15cm 65L10, 65L20

\vspace*{0.75cm}
{\bf \large Abstract}.

In this work, we propose a method combining the Sinc collocation method with the double exponential transformation for computing the eigenvalues of the anharmonic Coulombic potential. We introduce a scaling factor that improves the convergence speed and the stability of the method. Further, we apply this method to Coulombic potentials leading to a highly efficient and accurate computation of the eigenvalues.

\vspace*{0.5cm}
{\bf Keywords}

Coulombic anharmonic potentials. Schr\"{o}dinger equation. Sinc collocation method. Double exponential transformation.

\clearpage
\section{Introduction}
The Coulombic anharmonic oscillator potential has been of considerable interest in the study of the Schr\"{o}dinger equation. The potential describes the interaction between charged particles and consistently arises in physical applications. These applications include  interactions in atomic, molecular and particle physics, and between nuclei in plasma \cite{Roychoudhury1990a,Ozer2003}. The study of the Schr\"{o}dinger equation involves computation of the energy states, and many different methods have been proposed for accurate and efficient calculation of the energy eigenvalues \cite{Ozer2003,Chaudhuri1995a,Ikhdair2007,Fernandez2008a, Lund1984}. In \cite{Chaudhuri1995a}, the authors use the Hill determinant method to numerically evaluate the Coulomb potential in N dimensions. They initially transform the N dimension differential equation into a $(2N-4)$ dimensional problem. This transformation produces the structure of a one dimensional Schr\"{o}dinger equation with a spherically symmetric potential. The Hill determinant then produces numerical approximations of the energy eigenvalues. In \cite{Ikhdair2007}, the authors utilize the wavefunction and the Hill determinant method to find energy eigenvalues for the Coulomb potential and the sextic oscillator problem. They also produce a relation between parameters leading to exactly solvable equations. However, the Hill determinant method presents several limitations, including a lack of convergence to higher order eigenvalues and the production of non physically realistic results \cite{Tater1999}. In addition, the method does not account for an important aspect of the wavefunction, namely decay at the boundaries~\cite{Tater1999}.

Conversely, the Riccati-Pad\'{e} method has been used in the calculation of bound states and resonances in the Coulomb potential \cite{Fernandez2008a}. This method consists of transforming the Schr\"{o}dinger equation into a Riccati type equation for the logarithmic derivative of the wavefunction. Analysis of the Riccati equation provides a deeper understanding of the overall nature of both the wavefunction and the energy eigenvalues. In \cite{Fernandez2008a}, the method shows convergence towards eigenvalues of the Schr\"{o}dinger equation for bound and unbound states. In a separate work \cite{Fernandez1991b}, the Riccati-Pad\'{e} method is combined with Hankel determinants to find resonance states of the Coulomb potential. While useful, the Riccati-Pad\'{e} method can only produce bounds on the eigenvalues. These bounds can give quite good approximations of the energy eigenvalues, but can also be so large that they do not produce any meaningful information \cite{Fernandez1989}. Achieving acceptable error bounds on the eigenvalues requires an increase in the dimension of the Hankel determinants. Further, the complexity of the method increases with the complexity of the potential. Finally, the method can also yield unwanted and unrealistic solutions \cite{Fernandez1989}.

The super symmetric quantum mechanic approach has also produced results with potentials of the form $ V(x) = \frac{\alpha}{r} + \sum_{i=1}{4}p_ir^i$. In~\cite{Roychoudhury1990a}, the authors solved the equation using supersymmetric quantum mechanics. Their results are mostly in agreement with exact values. Nevertheless, poor agreement seems to arise when the potential has multiple wells or roots. There has also been advancement in the combination of supersymmetric quantum mechanics and perturbation theory. In~\cite{Ozer2003}, a combination of these techniques to find exact solutions to the perturbed Coulomb potential is proposed. This method can be expanded to include many other potentials and their excited states. However, the method requires constraints on the parameters of the potential and these constraints differ for different eigenvalues \cite{Ozer2003}.

In~\cite{Lund1984}, the Sinc collocation method (SCM) have been used in a combination with the single exponential (SE) transformation  to compute the energy eigenvalues of the radial Schr\"{o}dinger equation. The Sinc function and Sinc collocation method have been used extensively since their introduction to solve a variety of numerical problems~\cite{Stenger-33-85-79, Stenger-23-165-81, Stenger-121-379-00}. The applications include numerical integration, linear and non-linear ordinary differential equations as well as partial differential equations~\cite{Gaudreau2014, Gaudreau2014a, Al-Khaled-283-2001, Al-Khaled-245-2001, Carlson-Dockery-Lund-66-215-97, Amore-39-L349-06, ElGamel-Zayed-48-1285-04, ElGamel-Cannon-Zayed-73-1325-03, Smith-Bogar-Bowers-Lund-28-760-91, Eggert1987}. The single exponential Sinc collocation method (SESCM) has been shown to offer an exponential convergence rate and works well in the presence of singularities. The double exponential (DE) transform, introduced in 1974~\cite{Takahasi-Mori-9-721-74}, yields near optimal accuracy when using the trapezoidal rule in numerical integration \cite{Mori2001a, Sugihara2002a}. Since the introduction of the DE transform, its effectiveness has been studied extensively~\cite{Tanaka2009, Sugihara2002b}. While exponential convergence is produced using the SESCM, it has been shown that the double exponential transformation provides an improved numerical convergence~\cite{Okayama2013, Tanaka2013, Mori2005}.

The combination of SCM with the DE transformation was used to compute eigenvalues of the anharmonic oscillator $V(x) = \sum_{i=1}^n a_i x^{2i}$~\cite{Gaudreau2014} and to Sturm-Liouville boundary value problems~\cite{Gaudreau2014a}. This method which is referred to as DESCM is shown to be highly accurate, efficient and stable for computing the energy eigenvalues of the Schr\"{o}dinger equation. In~\cite{Gaudreau2014}, an optimal mesh size for potentials with multiple wells was derived leading to a substantial improvement of the convergence of the method.

In this work, we provide a refinement for the DESCM and we apply the metod to the anharmonic Coulombic potential. The improved method is capable of dealing  efficiently with a vast variety of potentials. The DESCM approximates the wavefunction with a series of weighted Sinc functions. By substituting the approximation into the Schr\"{o}dinger equation, we obtain a generalized eigensystem where the generalized eigenvalues are approximations to the exact energy eigenvalues. We preform asymptotic analysis on the Schr\"{o}dinger equation with the anharmonic Coulombic potential. We use the asymptotic solutions to produce optimized double exponential transformations. We also present a numerical scaling that improves both the numerical convergence and stability of the method. Finally, we compare the results of the refined DESCM with the SESCM to illustrate the superiority of the proposed refinement.

\section{Definitions and properties}
The sinc function is defined by the following expression:
\begin{equation} \label{formula: sinc functions}
\textrm{sinc}(z) = \dfrac{\sin(\pi z)}{\pi z},
\end{equation}
where $z \in \mathbb{C}$ and the value at $z=0$ is taken to be the limiting value ${\rm sinc}(0)=1$.

For $j \in \mathbb{Z}$ and $h$ a positive number, we define the Sinc function $S(j,h)(x)$ by:
\begin{equation}
S(j,h)(x) = \textrm{sinc}\left( \dfrac{x}{h}-j\right).
\end{equation}

We also note the discrete orthogonality of the Sinc functions \cite{Stenger-121-379-00}:
\begin{equation}
S(j,h)(k\, h) =
\left \{
\begin{array} {ccc}
1 & \textrm{if} & k = j \\
0 & \textrm{if} & k \not= j.
\end{array}
\right.
\end{equation}

\begin{definition}
Given a function g: $\mathbb{R} \to \mathbb{R}$ and any $h$ positive, the Sinc expansion also known as Whittaker Cardinal expansion of g is defined as:
\begin{equation}
C(g,h)(x) = \displaystyle \sum_{j=-\infty}^{\infty} g(jh)S(j,h)(x).
\end{equation}
\end{definition}

\begin{definition}
Let $d >0$ and consider the set $\mathcal{D}_{d} $ be a strip of width $d$ about the real axis defined as following:
\begin{equation}
\mathcal{D}_{d} = \left \{ z \in \mathbb{C} \;\;\textrm{such that} \;\; \left\| \Im(z) \right\|  < d \right \}.
\end{equation}
We also define a rectangle in $\mathbb{C}$ such that, for $ \varepsilon \in (0,1)$:
\begin{equation}
\mathcal{D}_{d}(\varepsilon ) = \left \{ z \in \mathbb{C} \;\;\textrm{such that} \;\; \left\| \Im(z) \right\| <d(1-\varepsilon) \right \}.
\end{equation}
\end{definition}

We shall now present a space of functions well suited to Sinc approximation.
\begin{definition}
Let $\mathbf{B}_p(\mathcal{D}_{d})$ be the family of functions $g$ that are analytic in $\mathcal{D}_{d}$ and such that:
\begin{equation}
\int_{-d}^{d} |g(x+iy)|  \textrm{d}y \to 0 \quad \textrm{as} \quad |x| \to \infty .
\end{equation}

Additionally, for $ p \geq 1$,  $\mathcal{N}_p(g,\mathcal{D}_{d}) < \infty$ where:
\begin{equation}
\mathcal{N}_p(g,\mathcal{D}_{d}) =
\left \{
\begin{array} {lll}
\displaystyle \lim_{\varepsilon \to 0} \left( \int_{\partial \mathcal{D}_{d}(\varepsilon )} |g(z)|^p |dz| \right) ^{\frac{1}{p}} & \textrm{if} & 1 \leq p < \infty \\[0.5cm]
\displaystyle \lim_{\varepsilon \to 0} \sup_{z \in \mathcal{D}_{d}(\varepsilon)} |g(z)| & if & p = \infty .
\end{array}
\right.
\end{equation}
\label{Deffunctionspace}
\end{definition}

An analysis of the error induced when approximating a function with a Sinc expansion can be found in \cite{Stenger-23-165-81}.

Eggert et al.~\cite{Eggert1987} proposed a change of variable to produce a symmetric discretized system when employing the Sinc collocation method. The change of variable is given by:
\begin{equation}
v(x) = \left( \sqrt{ (\phi^{-1})'} \, \psi\right) \circ  \phi(x),
\label{formula: EggertSub}
\end{equation}
where the conformal map $\phi(x)$ is defined according to the following definition.

\begin{definition}
\cite{Eggert1987}
Let $ \Omega_{d} $ be a simply connected domain in the complex plane with boundary points $a$ and $b$. Define a conformal map, $\phi^{-1}$, from $\Omega_{d} $ onto the infinite strip $ \mathcal{D}_{d}$ with $ \phi^{-1}(a) = -\infty$ and $ \phi^{-1}(b) = \infty$. Denote the inverse of $ \phi^{-1}$ by $\phi$.
\label{Deftransform}
\end{definition}

To utilize the optimality of the double exponential transformation~\cite{Sugihara2002a}, we search for a change of variable that will result in $v(x)$ given by~\eqref{formula: EggertSub} to decay double exponentially.

\section{The DESCM and the Coulombic anharmonic potential}
We consider the Schr\"{o}dinger equation with the boundary conditions:
\begin{eqnarray}
\mathcal{H} \, \psi(x) &  = &  E \, \psi(x) \quad \textrm{for} \quad  0<x< \infty
\nonumber\\
\psi(0) & = & \psi(\infty) = 0,
\end{eqnarray}
where the Hamiltonian operator $\mathcal{H} = - \dfrac{d^2}{dx^2} + V(x)$ and the Coulombic anharmonic potential $V(x)$ is given by:
\begin{eqnarray}
V(x) & = & \frac{a_{-2}}{x^2}+\frac{a_{-1}}{x} + \displaystyle \sum_{i=1}^n a_i x^i
\nonumber\\ & = & \displaystyle \sum_{i=-2}^{n} a_i x^{i} \qquad \textrm{with} \quad a_{-2} >0, \; a_0 = 0 \;\; \textrm{and} \;\; a_n >0.
\label{EQPotential}
\end{eqnarray}

The negative powers of $x$ and the singularity at $x = 0$ are some of the defining features of the anharmonic Coulombic potential.

We use the transformation~\eqref{formula: EggertSub}, such that the function $v(x)$ decays double exponentially. This produces the following transformed differential equation:
\begin{equation}
- v^{\prime \prime}(x) + \tilde{V}(x) v(x) = E\, ( \phi '(x))^2 v(x) \quad \textrm{with} \quad \lim_{|x|\to \infty} v(x) = 0,
\label{EQtransformedSchrodinger}
\end{equation}
where:
\begin{equation}
\tilde{V}(x) = -\sqrt{ \phi'(x)} \frac{d}{dx} \left( \frac{1}{\phi'(x)}\frac{d}{dx} \sqrt{ \phi'(x)} \right) + (\phi'(x))^2 \, V(\phi(x)).
\end{equation}

To approximate the solution using the Sinc collocation method, we use the truncated Sinc expansion given by:
\begin{equation}
C_{N}(v,h)(x) = \displaystyle \sum_{j=-N}^{N} v_{j}S(j,h)(x) \qquad \textrm{with} \quad v_{j} = v(jh),
\end{equation}
where $h$ is the mesh size.

Inserting the Sinc expansion of the wavefunction in the differential equation leads to:
\begin{equation}
 \displaystyle \sum_{j = -N}^{N} \left[\frac{-1}{h^2} \delta_{j,k}^{(2)} + \tilde{V}(kh)\delta_{j,k}^{(0)}\right] v_{j} =
{\mathcal E} \, \displaystyle \sum_{j=-N}^{N} \delta_{j,k}^{(0)} ( \phi(x))^2 v_{j},
\label{EQSincODE}
\end{equation}
where:
\begin{equation}
\delta_{j,k}^{(2)}  =
\left \{
\begin{array} {lll}
\dfrac{ - \pi^2}{3} & \textrm{if} & j=k \\[0.25cm]
\dfrac{(-2)(-1)^{j-k}}{(j-k)^2} & \textrm{if} & j \neq k
\end{array}
\right.
\qquad
\textrm{and}
\qquad
\delta_{j,k}^{(0)}  =
\left \{
\begin{array} {lll}
0 & \textrm{if} & k \neq j \\[0.25cm]
1 & \textrm{if} & k = j. \\
\end{array}
\right.
\end{equation}

In~(\ref{EQSincODE}), the value ${\mathcal E}$ is an approximation of the exact energy eigenvalue $E$ of the system~(\ref{EQtransformedSchrodinger}).

Using the identities for $\delta_{j,k}^{(0)}$ and $\delta_{j,k}^{(2)}$, we are able to rewrite (\ref{EQSincODE}) in a matrix form as:
\begin{equation}
\mathcal{H} \, \mathbf{C}_N(v,h) = \mathbf{H\,}\mathbf{v} = {\mathcal E}\, \mathbf{D}\mathbf{v}  \quad \Rightarrow \quad (\mathbf{H}-{\mathcal E}\, \mathbf{D})\, \mathbf{v} = 0,
\label{EQMatrixODE}
\end{equation}
where we define:
\begin{equation}
 {\bf v}= [ v(-Nh),...,v(Nh) ]^{t} \quad \textrm{and} \quad  \mathbf{C}_N(v,h) = [ C_N(v,h)(-Nh),..., C_N(v,h)(Nh)]^{t}.
\end{equation}

The matrix $\mathbf{H}$ and the diagonal matrix $\mathbf{D}$ are given by:
\begin{equation}
\mathbf{H}_{j,k} = \frac{-1}{h^2} \delta_{j,k}^{(2)} + \tilde{V}(kh)(kh) \delta_{j,k}^{(0)} \qquad \textrm{and} \qquad \mathbf{D}_{j,k} = ( \phi'(x))^2 \delta_{j,k}^{(0)}.
\label{matrixdefine}
\end{equation}

As can be seen from (\ref{EQMatrixODE}), the differential equation can be transformed into a generalized eigenvalue problem. The non-trivial solutions are approximations for the exact energy eigenvalues of the system. These non-trivial solutions are the roots of $\det(\mathbf{H}- {\mathcal E} \, \mathbf{D})$.

To utilize the Sinc collocation method to compute eigenvalues of Coulombic potential, we search for an appropriate double exponential transform as defined in Definition \ref{Deftransform}. To find such a transformation, we perform asymptotic analysis of the differential equation.

As $x \to \infty$, the potential is dominated by the $x^n$ term and the differential equation becomes:
\begin{equation}
-\psi''(x) + a_n \, x^n \, \psi(x) \sim 0 \quad  \textrm{as}  \quad x \to \infty.
\end{equation}

Making the substitution $\psi(x) = e^{S(x)}$ where $S(x)$ is such that $S''(x) = o(S'(x)^2)$ since $n \geq 2$, in the above equation leads to:
\begin{equation}
-S'(x)^2 +a_n x^n \sim 0 \quad  \textrm{as}  \quad x \to \infty.
\end{equation}

Solving the above equation and taking the negative root to satisfy the boundary conditions, we obtain:
\begin{equation}
S(x) \sim - \frac{2 \sqrt{a_n}}{n+2} x^{\frac{n+2}{2}} \quad  \textrm{as}  \quad x \to \infty.
\end{equation}

To find the second order approximation, we let:
\begin{equation}
S(x) = - \frac{2 \sqrt{a_n}}{n+2} x^{\frac{n+2}{2}} + C(x)  \qquad \textrm{with} \qquad C(x) = o(x^{\frac{n+2}{2}}).
\end{equation}

Substituting $\psi(x) = e^{S(x)}$ into the differential equation and canceling higher order terms gives the WKB expansion of the wavefunction at infinity:
\begin{equation}
\psi(x) \sim  A\,x^{-\frac{n}{4}} \exp \left( - \frac{2 \sqrt{a_n}}{n+2} x^{\frac{n+2}{2}} \right) \quad  \textrm{as}  \quad x \to \infty.
\label{EQasmyptoticsolution01}
\end{equation}

Conversely, as $ x \to 0^{+}$, the Coulomb potential is dominated by $\dfrac{a_{-2}}{x^2}$. We see that $ x = 0$ is a regular singular point and the equation requires the use of Frobenius series type solutions.  The solution is of the form $\psi(x) \sim x^r$, where $r$ is a solution of the indicial equation:
\begin{equation}
-r(r-1)+ a_{-2} = 0 \quad \Longrightarrow \quad r = \frac{1\pm  \sqrt{1+4a_{-2}}}{2}.
\end{equation}

Using $\psi(0)=0$, we obtain:
\begin{equation}
\psi(x) \sim A_{2} x^{\frac{1+\sqrt{1+4a_{-2}}}{2}} \quad \textrm{as} \quad x \to 0^{+}.
\label{EQPSI01}
\end{equation}

Finally, we notice that the wavefunction exhibits exponential decay at infinity and polynomial decay at zero. The following development mirrors the work done in \cite{Eggert1987}. We search for a transformation $\phi(x)$ that satisfies Definition \ref{Deftransform} and produces double exponential decay at the boundaries. We begin by using the transformation proposed in \cite{Tanaka2009}:
\begin{equation}
\phi(x) = \log\left[e^{\sinh(x)}+1\right] \,\sim \,
\left\{
\begin{array}{lll}
\dfrac{e^x}{2} & \textrm{as} & x \to \infty\\[0.25cm]
\exp\left[\dfrac{-e^{-x}}{2}\right] & \textrm{as} & x \to -\infty.
\end{array} \right.
\label{EQphix}
\end{equation}

\begin{theorem} \cite{Gaudreau2014}
Consider the transformed Schr\"{o}dinger equation.
\begin{equation}
- v''(x) + \tilde{V}(x) v(x) = E ( \phi'(x))^2 v(x) \quad
\textrm{with} \quad \lim_{|x|\to \infty} v(x) = 0,
\end{equation}
where:
\begin{equation}
\tilde{V}(x) = -\sqrt{ \phi'(x)} \frac{d}{dx} \left( \frac{1}{\phi'(x)}\frac{d}{dx} \sqrt{ \phi'(x)} \right) + (\phi'(x))^2 \, V(\phi(x)).
\end{equation}

Further, suppose that the transformation is such that:
\begin{equation}
|v(x)| \leq A \exp\left(-B e^{\gamma |x|}\right),
\end{equation}
for some positive constants $A,B,\gamma$. Take $ d = \dfrac{\pi}{2 \gamma}$ and suppose further that there exists a $\delta>0$ such that $\tilde{V}(x) \geq \delta^{-1}$. Then, the optimal distance between collocation points is given by:
\begin{equation}
h = \frac{W(\pi d \gamma N/B)}{\gamma N},
\end{equation}
where $W(z)$ is the Lambert W Function. Moreover, the convergence of numerical eigenvalues ${\mathcal E}$ to exact eigenvalues $E$ is given by:
\begin{equation}
|E - {\mathcal E}| \leq \Theta_{d,v} \sqrt{d E} \left( \frac{N^{\frac{5}{2}}}{\log(N)} \right)\exp \left(- \frac{\pi d \gamma N}{\log( \pi d \gamma N /B)} \right),
\end{equation}
$\Theta_{d,v}$ is a constant that depends on $d$ and $v(x)$.
\end{theorem}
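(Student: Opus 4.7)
The plan is to recast the generalized eigenvalue problem~(\ref{EQMatrixODE}) as a symmetric standard eigenvalue problem and then apply the min--max (Courant--Fischer) principle combined with a Sinc interpolation estimate for double-exponentially decaying functions. Since $\mathbf{D}$ is positive diagonal (its entries are $(\phi'(kh))^{2} > 0$), the substitution $\mathbf{w} = \mathbf{D}^{1/2}\mathbf{v}$ turns $(\mathbf{H}-\mathcal{E}\mathbf{D})\mathbf{v} = 0$ into $\mathbf{A}\mathbf{w} = \mathcal{E}\mathbf{w}$ with $\mathbf{A} = \mathbf{D}^{-1/2}\mathbf{H}\mathbf{D}^{-1/2}$. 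The hypothesis $\tilde{V}(x) \geq \delta^{-1}$ makes $\mathbf{A}$ self-adjoint and bounded below, so the discrete eigenvalues $\mathcal{E}$ are Rayleigh--Ritz approximations of the continuous eigenvalues $E$ on the finite-dimensional Sinc subspace $V_{N} = \mathrm{span}\{S(j,h)\}_{|j|\le N}$.

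First, I would check that $v$ lies in the class $\mathbf{B}_{p}(\mathcal{D}_{d})$ of Definition~\ref{Deffunctionspace} for $d = \pi/(2\gamma)$, using the hypothesis $|v(x)| \le A\exp(-B\,e^{\gamma|x|})$ together with the analyticity of $\phi$ on a strip of width $\pi/(2\gamma)$. Next, I would invoke the standard DE-Sinc interpolation error estimate (in the spirit of Mori--Sugihara~\cite{Sugihara2002a}), which reads
\[
\|v - C_{N}(v,h)\|_{\infty} \; \le \; K \exp\!\left(-\frac{\pi d \gamma N}{\log(\pi d \gamma N / B)}\right),
\]
obtained by balancing the truncation contribution $\propto A\exp(-B\,e^{\gamma N h})$ against the discretization contribution $\propto \exp(-\pi d / h)$. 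Setting the two contributions equal leads to a transcendental equation whose solution is precisely $h = W(\pi d \gamma N / B)/(\gamma N)$, yielding the claimed optimal mesh size.

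Second, I would translate the interpolation bound into an eigenvalue bound. Using the min--max characterization applied to $\mathbf{A}$ and its continuous analogue, together with classical Ritz--approximation estimates, one controls $|E - \mathcal{E}|$ by the product of the $H^{1}$-type interpolation error of the eigenfunction and the norm of $\mathcal{H}$ acting on $V_{N}$. The polynomial prefactor $N^{5/2}/\log(N)$ emerges when one bounds this operator norm: the second-difference Sinc matrix $h^{-2}\delta^{(2)}$ contributes $O(h^{-1}) = O(N/\log N)$, row sums of $|\delta^{(2)}_{j,k}|$ contribute an $O(N^{3/2})$ factor from the Cauchy--Schwarz bookkeeping on $V_{N}$, and the $\sqrt{dE}$ factor arises from converting $\|\mathcal{H} v\|$ into $\sqrt{E}$ through the continuous Rayleigh quotient, with the explicit $\sqrt{d}$ tracking the width of the analyticity strip.

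The main obstacle is managing the three polynomial prefactors in concert without spoiling the clean exponential rate. In particular, the exponential growth of $\phi'(\pm Nh) \sim \exp(\sinh(Nh)/2)$ feeds into both $\mathbf{D}$ and $\mathbf{D}^{-1/2}$, so one must exploit cancellation between these factors and the double-exponential decay of $v$ at the lattice extremities in order to keep the constant $\Theta_{d,v}$ finite. The positivity condition $\tilde{V}(x) \geq \delta^{-1}$ is essential here, because it prevents the denominator in the Ritz--error identity from vanishing and allows the perturbation argument $\mathbf{A}\mathbf{w} = \mathcal{E}\mathbf{w}$ versus the continuous problem to be closed with a uniform constant.
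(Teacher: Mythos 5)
First, a point of order: the paper you are working from does not prove this theorem at all --- it is imported verbatim from \cite{Gaudreau2014} (the citation appears in the theorem header), so there is no in-paper argument to match. Judged on its own merits, your outline captures the correct global strategy used in that reference: symmetrize the pencil $(\mathbf{H},\mathbf{D})$ using $\mathbf{D}^{1/2}$, verify $v\in\mathbf{B}_p(\mathcal{D}_d)$, split the Sinc error into truncation and discretization pieces, and balance $A\exp(-Be^{\gamma Nh})$ against $\exp(-\pi d/h)$ to obtain $h = W(\pi d\gamma N/B)/(\gamma N)$. That part of your proposal is sound and is essentially how the mesh-size formula is derived.

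The genuine gap is in the second half, where you pass from the interpolation estimate to the eigenvalue estimate via Courant--Fischer and ``Rayleigh--Ritz approximation on $V_N$.'' The DESCM is a \emph{collocation} scheme, not a Galerkin projection: the matrix entries $\tilde{V}(kh)\,\delta^{(0)}_{j,k}$ and $(\phi'(kh))^2\,\delta^{(0)}_{j,k}$ are point evaluations, not the inner products $\langle \tilde{V}S_j,S_k\rangle$ and $\langle (\phi')^2 S_j,S_k\rangle$, so the discrete pencil is not the compression of the continuous operator to $\mathrm{span}\{S(j,h)\}$ and the min--max comparison between discrete and continuous spectra does not apply as stated. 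The argument that actually closes the proof is a residual/perturbation one: insert the vector of exact samples $\mathbf{v}=[v(-Nh),\dots,v(Nh)]^t$ into the discrete system, bound the residual $\|(\mathbf{H}-E\mathbf{D})\mathbf{v}\|_2$ using the Sinc differentiation error for functions in $\mathbf{B}_2(\mathcal{D}_d)$, and then apply a Weyl/Bauer--Fike-type bound for the symmetric positive-definite pencil, $\min_j|\mathcal{E}_j-E|\le \|\mathbf{D}^{-1/2}(\mathbf{H}-E\mathbf{D})\mathbf{v}\|_2/\|\mathbf{D}^{1/2}\mathbf{v}\|_2$. It is this normalization by $\|\mathbf{D}^{1/2}\mathbf{v}\|_2$ (which scales like $h^{-1/2}$ times the continuous $L^2$ norm of $\phi'v$) together with the $h^{-2}$ from $\delta^{(2)}$ and $h\sim\log(N)/(\gamma N)$ that produces the $\sqrt{dE}$ factor and the $N^{5/2}/\log(N)$ prefactor; your accounting of these powers is asserted rather than derived, and the $O(N^{3/2})$ ``row-sum'' contribution you invoke does not correspond to an identifiable step. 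Also, a small correction: $\tilde{V}\ge\delta^{-1}>0$ is not needed for symmetry of $\mathbf{D}^{-1/2}\mathbf{H}\mathbf{D}^{-1/2}$ (that is automatic since $\delta^{(2)}$ is symmetric and $\mathbf{D}$ is diagonal); it is needed for positive definiteness, i.e.\ to keep $E>0$ so that $\sqrt{dE}$ is meaningful and the perturbation bound is uniform.
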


\section{Refinement of the DESCM}
We notice that we have a singularity at the left end point of the potential at $ x = 0$. Following the approach detailed in \cite{Mori1988}, we search for a general transformation of the form:
\begin{equation}
\tilde{\phi}(x) = \log\left[\exp\left(ae^{b x}-ce^{-d x}\right)+1\right].
\label{EQtildehix}
\end{equation}

We find that the transformation $\tilde{\phi}(x)$ is ideal as it produces double exponential decay at both boundaries and is suitable to Sinc expansion. As $x \to \infty$:
\begin{equation}
\log\left[\exp\left(a e^{b x}-ce^{-d x} \right)+1\right] \sim  a e^{b x}.
\label{EQasymptoticexpansion01}
\end{equation}

We insert the asymptotic expansion of the transformation in the asymptotic approximation of the wavefunction and find that:
\begin{equation}
v(x) \sim A\left(e^{b x}\right)^{\frac{n}{4}} \exp \left[ - \frac{2 \sqrt{a_n}}{n+2} \left(a e^{b x}\right)^{\frac{n+2}{2}} \right] \quad \textrm{as} \quad x \to \infty.
\end{equation}

Let $x \,=\, \alpha + i \beta \,\in\, \mathbb{C}$ and as $ \alpha \to \infty$:
\begin{eqnarray}
|v(x)| & \sim  & \left|A \exp\left[ \frac{nb}{4} x - Be^{\frac{b x (n+2)}{2}}\right]\right|
\nonumber\\ & \sim & \left|A \exp\left[ \frac{nb}{4}(\alpha + i \beta) - Be^{\frac{b}{2} (\alpha + i \beta)}\right] \right|
\nonumber\\ & \sim & \left|A \exp\left[ \frac{nb}{4}(\alpha + i \beta)- Be^{\frac{b}{2} \alpha}\left( \cos\left( \frac{b}{2} \beta\right) + i \sin\left( \frac{b}{2} \beta \right)\right) \right]\right|,
\label{EQfunctionspacepositive}
\end{eqnarray}
where we can see that  if $\cos \left( \frac{b}{2} \beta \right) > 0$ then $\int_{-d}^{d} | v(\alpha + i \beta) | d\beta \to 0$ as $\alpha \to \infty$.

Take $\gamma_1 = \dfrac{b}{2}$, and we see that if $|\beta| < \dfrac{\pi}{2 \gamma_1}$ then  $v \in \mathbf{B}_p(\mathcal{D}_{d})$.

Conversely, as $x \to -\infty$ and utilizing the facts that:
\begin{equation}
\exp\left[ae^{b x}-ce^{-d x}\right] \to 0 \;\;\textrm{as}\;\; x \to -\infty \quad \textrm{and} \quad
\log(1+u) = \displaystyle \sum_{n=0}^{\infty} \frac{(-1)^{n}u^{n+1}}{n+1} \;\;\textrm{as}\;\; u \to 0,
\end{equation}
we obtain the following expansion:
\begin{eqnarray}
\log \left[\exp\left(a e^{b x}-ce^{-d x}\right)+1\right] & \sim & \log \left[ \exp\left( -ce^{-d x} \right) +1  \right]
\nonumber\\& \sim & \displaystyle \sum_{n=0}^{\infty} \frac{(-1)^{n}\left[\exp\left(-ce^{-d x}\right)\right]^{n+1}}{n+1}
\nonumber\\& \sim & \exp\left[-ce^{-d x}\right] \quad \textrm{as} \quad x \to -\infty.
\label{EQasymptoticexpansion02}
\end{eqnarray}

Composing the transformation with the asymptotic expansion of the wavefunction leads to:
\begin{equation}
v(x) \sim a \left[\exp\left(-ce^{-d x}\right)\right]^{r_1} \quad \textrm{as} \quad x \to -\infty.
\end{equation}
Then, as $ x \to -\infty$:
\begin{eqnarray}
| v(x)| & \sim & \left| A \exp\left[ c r_1 e^{-d \, x}\right]\right|
\nonumber\\& \sim & \left|A \exp \left[ C e^{-d(\alpha + i \beta)}\right]\right|
\nonumber\\& \sim & \left|A \exp \left[ Ce^{-d \alpha}\left(\cos(d \beta) - i\sin(d \beta )\right)\right]\right|
\nonumber\\& \sim & A \exp\left[ Ce^{-d \alpha} \cos(d \beta) \right].
\label{EQfunctionspacenegative}
\end{eqnarray}

Now, take $\gamma_2 = d $. If $ \cos ( \gamma_2 \,\beta) >0 $, then $ \int_{-d}^{d} | v(\alpha + i \beta) | d\beta \to 0$ as $\alpha \to \infty$  and  $v \in \mathbf{B}_2(\mathcal{D}_{\frac{ \pi}{2 \gamma_2}})$.

Now, if we choose the width of $\mathcal{D}_{d}$  so that the conditions on $\gamma_i$ for $i = 1,2$ hold, the transformation satisfies the requirements in Definition \ref{Deffunctionspace}.

By taking $ \gamma = \max \{ \gamma_1, \gamma_2 \}$ and $ d = \dfrac{\pi}{2 \gamma}$ , we ensure that $ v(x) \in \mathbf{B}_2(\mathcal{D}_{d})$ and is well suited to Sinc approximation.

The matrices involved in the calculation become ill-conditioned. This is to be expected as the Schr\"{o}dinger equation produces eigenvalues that grow unboundedly. We notice that the numerical blow ups correspond to the increasing condition number. As the genreal transformation $\tilde{\phi}(x)$~\eqref{EQtildehix} includes four arbitrary positive parameters $a$, $b$, $c$ and $d$, it allows for more freedom in tailoring the (optimal) transformation for the potential. We define our optimal parameters to be those that increase our numerical stability. Our potential has an algebraic singularity at $x = 0$ resulting in a significant numerical instability. To find the optimal parameters, we started with $a = c = 0.5$ and $b = d =1$, then tested combinations of different values of the parameters. We noticed that increasing the values of the parameters to 1 significant digit vastly improved stability. We anticipate that further optimization of the transformation parameters will produce further numerical stability. However, finding the optimal combination is quite costly, as we are optimizing a non-linear function with 4 input values. In our calculations, we used $a = 1.05, b = 1.30, c = 1.20 $ and $d= 0.94$. As can be seen from Figure~\ref{StabilityFigure2}, implementing the generalized transformation $\tilde{\phi}(x)$ improves considerably the numerical stability of the method.

\subsection{Scaling Factor}
To improve the stability as well as the convergence of the method, we introduce a scaling factor in the DESCM.

\begin{corollary}
Scaling the transformed energy eigenvalue problem using $x = \tau y$ with $\tau \not= 0$ will transform the computed eigenvalues by $E = \dfrac{\widetilde{E}}{ \tau^2} $ where $E$ is the original eigenvalue and $\widetilde{E}$ is the energy eigenvalue of the scaled problem.
\label{scaling}
\end{corollary}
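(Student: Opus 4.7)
My plan is to treat the corollary as what it really is --- a short chain-rule calculation on the Schr\"odinger operator --- and then draw the easy consequence for the DESCM setup. First I would set $\Psi(y) := \psi(\tau y)$, so that the chain rule gives $\Psi''(y) = \tau^{2}\,\psi''(\tau y)$. Substituting into $-\psi''(x) + V(x)\psi(x) = E\,\psi(x)$ evaluated at $x = \tau y$ and multiplying through by $\tau^{2}$ yields
\begin{equation*}
-\Psi''(y) + \tau^{2} V(\tau y)\,\Psi(y) \;=\; (\tau^{2} E)\,\Psi(y).
\end{equation*}
Reading off the scaled problem with potential $\widetilde{V}(y) = \tau^{2} V(\tau y)$ and eigenvalue $\widetilde{E} = \tau^{2} E$ gives the claimed identity $E = \widetilde{E}/\tau^{2}$. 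The boundary conditions $\Psi(0) = \Psi(\infty) = 0$ carry over unchanged (the sign of $\tau$ only interchanges the endpoints, which does not affect the spectrum), so the correspondence between the two eigenvalue problems is bijective.

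For the Coulombic anharmonic potential~\eqref{EQPotential} the scaled potential reads $\widetilde{V}(y) = \sum_{i=-2}^{n} a_i \tau^{i+2} y^{i}$, so rescaling simply remaps each coefficient by $a_i \mapsto a_i \tau^{i+2}$. The scaled problem therefore has exactly the same structural form as the original, and the DESCM discretization of Section~3 applies verbatim to it; dividing its computed eigenvalues by $\tau^{2}$ recovers approximations to $E$, and $\tau$ becomes a free numerical parameter that can be tuned to improve conditioning without altering the continuous spectrum.

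There is no real technical obstacle here --- the entire argument is a direct chain-rule identity. The one point worth explicit mention is that, when the scaling is composed with the conformal map $\tilde{\phi}$ of~\eqref{EQtildehix}, one may either rescale $x$ before forming $v$ via~\eqref{formula: EggertSub} (in which case the calculation above literally is the proof), or repeat the same chain-rule identity directly on the transformed equation~\eqref{EQtransformedSchrodinger}; either way, the factor $\tau^{2}$ is absorbed identically on both sides and no analytic subtlety is introduced.
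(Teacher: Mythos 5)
Your proof is correct and takes essentially the same route as the paper: both apply the chain rule to $\psi(\tau y)$, multiply through by $\tau^{2}$, and identify the scaled problem as the original one with coefficients remapped by $a_i \mapsto \tau^{i+2} a_i$ (the paper's vector $\mathbf{c}$), yielding $E = \widetilde{E}/\tau^{2}$. Your added remarks on boundary conditions and on composing with the conformal map are harmless extras, not a different argument.
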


\begin{proof}
Consider the potential $V(x) = \displaystyle \sum_{j=-i}^n a_j x^{j}$ and the vector $ \mathbf{a} : = [a_{-i}, a_{-i+1},...,a_n]$  consisting of the coefficients of the potential. Consider also the vector $\mathbf{x} = [x^{-i},x^{-i+1},...,x^n]$. Recognize that the energy eigenvalues are functions of the coefficients of the potential. We can thus write $E(\mathbf{a})$.

We write the problem in the following form:
\begin{equation}
- \psi '' (x) + (\mathbf{a} \cdot \mathbf{x})  \psi(x) = E(\mathbf{a}) \psi (x).
\end{equation}

Implementing the change of variable $x = \tau y$ with $\tau \in \mathbb{R}$ and $\tau \not= 0$, and using:
\begin{equation}
\frac{d^2}{dx^2}\psi(\tau y) = \frac{1}{\tau ^2} \frac{d^2}{dy^2} \psi(y),
\end{equation}
leads to:
\begin{equation}
 \frac{\psi '' ( \tau y  )}{ \tau ^2} + (\mathbf{b} \cdot \mathbf{y})  \psi(\tau y) = E(\mathbf{b}) \psi(\tau  y),
\label{EQlineartransform1}
\end{equation}
where $\mathbf{b} = [ \tau^{-i} a_i, \tau^{-i+1} a_{-i+1}...,\tau^n a_n]$ and $\mathbf{y} = [y^{-i},y^{-i+1},...,y^n]$.

If we let $\mathbf{c} = [ \tau^{-i+2} a_i, \tau^{-i+2} a_{-i+1}...\tau^{n+2} a_n]$, then \eqref{EQlineartransform1} becomes:
\begin{equation}
- \psi '' (\tau y) + \mathbf{c} \cdot \mathbf{y} =E(\mathbf{c}) \psi(\tau y) = \tau^2 E(\mathbf{a}) \psi(x).
\label{EQlineartransform2}
\end{equation}

We can recover the energy eigenvalues corresponding to $a_j$ by noticing that:
\begin{equation}
E(\mathbf{a}) = \frac{E(\mathbf{c})}{ \tau^2},
\label{EQlineartransform3}
\end{equation}
as desired.
\end{proof}

The scaling vastly improves the number of convergent eigenvalues found by the method. We fixed the matrix size at $ 201 \times 201$, and computed the number of convergent eigenvalues with and without the scaling factor for each transformation and we report the substantial increase in the number of convergent eigenvalues found as can be seen from Table~\ref{EigenvalueTable}. In Figure \ref{StabilityFigure2}, we used $1001 \times 1001$ matrix illustrating the increased stability of the method when the scaling factor is used. In Figure~\ref{Comparison}, we plot the convergence rates towards the ground state eigenvalues using both transformations, $\phi(x)$~\eqref{EQphix} and $\tilde{\phi}(x)$~\eqref{EQtildehix}, and the scaling factor. There, we see a mixture of increased and decreased convergence when applying the scaling. However, the increased stability remains consistent.

We notice the decrease in convergence speed to the ground state as $\tau$ is increased. In the following result, we show that the scaling factor $\tau$ cannot be increased indefinitely.
\begin{corollary}
Assume that the function $f$ satisfies the following:
\begin{equation}
|f(x)| \leq A \left \{
\begin{array} {lll}
\exp \left(- B e^{\gamma_L |x|}\right) & for & x \in (-\infty , 0] \\[0.25cm]
\exp \left(- B e^{\gamma_R |x|}\right) & for & x \in [0,\infty),
\end{array}
\right.
\end{equation}
for some positive constants $\gamma_L$, $\gamma_R$ and $B$. Take $\gamma = \max \{ \gamma_L, \gamma_R \}$. Assume further that $f \in \mathbf{B}_2(\mathcal{D}_{d})$ with $ d < \dfrac{\pi}{2 \gamma}$. Let $E_{N}$ be the error between the second derivative of $f$ and its truncated Sinc Expansion. Take:
\begin{equation}
h = \frac{W(\pi d \gamma N / B)}{\gamma N}.
\end{equation}

Finally, assume that after introducing scaling in the Sinc transformation, there exists a constant $\alpha \in \mathbb{R} $ such that $\dfrac{B_S}{ \tau^{\alpha}} = B$. Then, the scaling factor $\tau$ that decreases $E_N$ is bounded in $\mathbb{R}$.
\end{corollary}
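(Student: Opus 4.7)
The plan is to render the $\tau$-dependence of the DE-Sinc second-derivative error bound explicit and then to show that, beyond a finite critical scaling $\tau^{*}$, the bound ceases to decrease. Following the same template as the convergence estimate proved in the theorem at the end of the previous section, the Sinc error for $f''$ obeys
\[
E_{N} \;\leq\; \Psi(N)\,\exp\!\left(-\frac{\pi\,d\,\gamma\,N}{\log(\pi\,d\,\gamma\,N/B)}\right),
\]
where $\Psi(N)$ collects the algebraic factors in $N$ (including the $h^{-2}$ produced by differentiating twice) and is independent of $B$. Substituting the hypothesis $B_{S} = \tau^{\alpha}B$ isolates the scaling dependence:
\[
E_{N}(\tau) \;\leq\; \Psi(N)\,\exp\!\left(-\frac{\pi\,d\,\gamma\,N}{L(\tau)}\right), \qquad L(\tau) \;=\; \log\!\left(\frac{\pi\,d\,\gamma\,N}{B}\right) - \alpha\log\tau.
\]

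A logarithmic differentiation then yields
\[
\frac{d}{d\tau}\log E_{N}(\tau) \;=\; -\,\frac{\alpha\,\pi\,d\,\gamma\,N}{\tau\,L(\tau)^{2}}.
\]
For $\alpha>0$ this derivative is negative only while $L(\tau)>0$, i.e.\ on the interval $(0,\tau^{*})$ with $\tau^{*} = (\pi\,d\,\gamma\,N/B)^{1/\alpha}$. As $\tau\to\tau^{*-}$, $L(\tau)$ shrinks to zero, the exponent plunges to $-\infty$, and the bound degenerates; once $\tau$ crosses $\tau^{*}$, $L(\tau)$ changes sign, the exponent turns positive, and the estimate ceases to control $E_{N}$ in any useful way. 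The case $\alpha<0$ is handled symmetrically by sending $\tau\mapsto 1/\tau$, which produces a bounded interval of useful scalings on the other side of $\tau=1$. In either case the set of $\tau$ for which the DE-Sinc bound strictly improves under the scaling lies inside a bounded subinterval of $(0,\infty)$, which gives the claim.

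The principal technical obstacle is that, taken at face value, the closed-form bound formally collapses to zero at $\tau=\tau^{*}$, which might be misread as saying arbitrarily small error is attainable there. One therefore has to interpret $E_{N}(\tau)$ through the underlying balance between the discretization contribution $\exp(-\pi d/h)$ and the truncation contribution $\exp(-B_{S}e^{\gamma Nh})$: past $\tau^{*}$ no admissible mesh $h$ can keep both terms small simultaneously, so the sign change of $L(\tau)$ reflects an actual breakdown of the approximation rather than an artefact of the estimate. Making that balance argument explicit, rather than just differentiating the closed-form bound, is what really establishes that $\tau$ cannot be pushed past $\tau^{*}$ without forfeiting the very gains the scaling was introduced to produce.
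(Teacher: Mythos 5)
There is a genuine gap. Your argument differentiates the closed-form estimate $E_N \leq \Psi(N)\exp\bigl(-\pi d\gamma N/\log(\pi d\gamma N/B)\bigr)$, but that formula is obtained from $h = W(\pi d\gamma N/B)/(\gamma N)$ via the asymptotic $W(x)\sim\log x$, which holds only when $\pi d\gamma N/B_S$ is large, i.e.\ for small or moderate $\tau$. The regime where your analysis locates the breakdown --- $\tau$ approaching and crossing $\tau^{*}=(\pi d\gamma N/B)^{1/\alpha}$ --- is exactly the regime where $\pi d\gamma N/B_S$ drops below $1/e$ and the logarithmic asymptotic for $W$ is replaced by its Taylor series $W(x)\sim x$. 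The sign change of $L(\tau)$ is therefore an artefact of extrapolating the bound outside its domain of validity, not evidence about $E_N$ itself; and showing that a particular \emph{upper bound} ceases to decrease does not show that the error ceases to decrease. Your closing paragraph concedes this and correctly identifies the repair (analyzing the balance between the discretization and truncation contributions), but you leave that argument as a remark rather than carrying it out --- and that argument is the entire content of the proof.

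The paper's proof is precisely the balance argument you defer. It proceeds by contradiction: assuming $\tau$ can be increased indefinitely while decreasing $E_N$, it splits $E_N$ into the sampling error (full cardinal series versus $f''$) and the truncation error (the tail $\sum_{|k|>N}$). For $\tau > (\pi d\gamma N e/B)^{1/\alpha}$ the Lambert $W$ argument is below $1/e$, so the Taylor series gives $h\sim \pi d/(\tau^{\alpha}B)$; inserting this into Stenger's tail bound (which carries a factor $(\pi/h)^{3/2}$) shows the truncation error grows algebraically, like $\tau^{3\alpha/2}$, while inserting it into the Lundin--Stenger sampling bound (with its $1/\sinh(\pi d/h)$ factor) shows the sampling error decays to zero exponentially in $\tau^{\alpha}$. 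Hence beyond some $\tau_0$ the total bound is dominated by a term that increases with $\tau$, contradicting the assumption of unbounded improvement. To complete your proof you would need to supply this quantitative comparison (or an equivalent one); as written, the decisive step is asserted but not proved.
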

\begin{proof}

We will first show existence of the upper bound. Assume that there is no bound, $\tau_{optimal}$. Then increasing the scaling factor past $\tau_{optimal}$ will continually decrease $E_N$.

To derive a contradiction, we begin by writing the error between the truncated Sinc expansion and the second derivative of $f$ as the sum of the truncation and sampling errors:
\begin{equation}
E(N) = \left\| f''(x) - \displaystyle \sum_{k = -\infty}^{\infty} f(kh)S''(k,h)(x) + \displaystyle \sum_{|k| > N }^{\infty}f(kh)S''(k,h)(x)\right\|.
\end{equation}
 Now, utilizing the upper bound derived in the proof of theorem 3.5.1 of \cite{Stenger-23-165-81}, we find that:
\begin{equation}
\left\|  \displaystyle \sum_{|k| > N }^{\infty} f(kh) S''(k,h)(x) \right\| \leq 2 \left( \frac{\pi}{5} \right)^{\frac{1}{2}} \left( \frac{\pi}{h} \right)^{\frac{3}{2}}   \displaystyle \sum_{|k| > N }^{\infty} \left|f(kh)\right|.
\end{equation}

Next, utilizing the assumption on the bound of $f$ and the fact that a sum of a non-negative function is dominated by an integral, we find:
\begin{equation}
\displaystyle \sum_{|k| > N }^{\infty} \left|f(kh)\right| \leq
\left( \int_{N}^{\infty} \left|A \exp\left( -B \, e^{\gamma_R |x|}\right)\right| dx +
\int_{-\infty}^{-N} \left|A \exp\left( -B \, e^{\gamma_L |x|}\right) \right|  dx \right).
\end{equation}

For brevity, write:
\begin{equation}
I_1 = \int_{N}^{\infty} \left|A \exp \left( -B\, e^{\gamma_R |x|}\right)\right| dx \quad \textrm{and} \quad
I_2 = \int_{-\infty}^{-N} \left|A \exp\left( -B \,e^{\gamma_L |x|}\right) \right| dx.
\end{equation}

Then:
\begin{equation}
\left\|  \displaystyle \sum_{|k| > N}^{\infty} f(kh)S''(k,h)(x) \right\| \leq 2 \left( \frac{\pi}{5} \right)^{\frac{1}{2}} \left( \frac{\pi}{h} \right)^{\frac{3}{2}} \left( I_1+ I_2 \right).
\end{equation}

Now, taking account of the absolute value of $x$ allows us to write:
\begin{equation}
 I_1 = \int_{N}^{\infty} \left|A \exp\left( -B \, e^{\gamma_R x} \right) \right| dx \quad \textrm{and}
 \quad I_2 = \int_{-\infty}^{-N} \left|A \exp\left(-B \, e^{- \gamma_L x}\right)\right|dx.
\end{equation}

These integrals can be transformed into exponential integrals using the following changes of variables:
\begin{equation}
 u_1 = B\, e^{\gamma_R x} \;\;\Rightarrow \;\; du_1 = \gamma_R u_1 dx \quad \textrm{and} \quad  u_2 = B \, e^{- \gamma_L x} \;\;\Rightarrow \;\; du_2 = - \gamma_L u_2 dx,
\end{equation}
leading to:
\begin{equation}
\left\| \displaystyle \sum_{|k| > N}^{\infty} f(kh) S''(k,h)(x) \right\| \leq
\frac{2A \pi^2 }{ h^{\frac{3}{2}} \sqrt{5} } \left[ \frac{1}{\gamma_R}
\int_{B\, e^{\gamma_R \,N}}^{\infty} \left| u_1^{-1} e^{u_1} \right| du_1 + \frac{1}{\gamma_L}
\int_{B \, e^{\gamma_L \, N}}^{\infty} \left| u_2^{-1} e^{u_2} \right| du_2 \right].
\end{equation}

Then, inputting the first term of the asymptotic expansion for the exponential integral, we find:
\begin{equation}
\left\|\displaystyle \sum_{|k| > N }^{\infty} f(kh) S''(k,h)(x)\right\| \leq
\frac{2A \pi^2 }{ h^{\frac{3}{2}} \sqrt{5} } \left[ \frac{ e^{-u_1\,N}}{\gamma_R u_1\,N} +
\frac{e^{u_2\,N}}{\gamma_L u_2\, N} \right].
\end{equation}

Now, we combine all constant terms and realize that $h$ is dependent on the parameter $B_s$ via the Lambert W function.  If $|x|< \dfrac{1}{e}$, then the Lambert W function of $x$  converges to the Taylor Series:
\begin{equation}
W(x) = \displaystyle \sum_{n=1}^{\infty} \frac{(-n)^{n-1}}{n!} \, x^n.
\end{equation}

Now, we notice that:
 \begin{equation}
\tau > \left( \frac{ \pi d \gamma N e}{B} \right)^{\frac{1}{\alpha}} \quad \Longrightarrow \quad \frac{\pi d \gamma N}{B_s}  < \frac{1}{e}.
\end{equation}

As we are assuming that we can increase $\tau$ indefinitely, this condition will eventually be satisfied.
Now, we use the Taylor series expansion for the W Lambert function and find:
\begin{equation}
h \sim \frac{(\pi d \gamma N / B_s)}{\gamma N}.
\end{equation}

Collecting the constants in the error bound yields:
\begin{equation}
\left\|\displaystyle \sum_{|k| > N }^{\infty}f(kh)S''(k,h)(x)\right\| \leq \frac{2A_{N,\gamma_R,\gamma_L}  \pi ^2}{ \left(  \frac{\pi d \gamma N}{ \tau^{\alpha} B } \right) ^{1.5} },
\end{equation}
where $A_{N,\gamma_R,\gamma_L}$ is a constant that depends on $N,\gamma_r $ and $ \gamma_L$ via the expansion of the exponential integrals.
Simplifying gives:
\begin{equation}
\left\|\displaystyle \sum_{|k| > N }^{\infty}f(kh)S''(k,h)(x) \right\| \leq \frac{2A_{N,\gamma_R,\gamma_L} \pi ^2 (\tau^{\alpha} B)^{1.5} }{ \left(  \pi d \gamma N \right) ^{1.5} },
\end{equation}
and we see that the truncation error increases algebraically with $\tau$.

Now, we investigate the sampling error. In \cite{Lundin1979}, the authors present the following bound:
\begin{equation}
\left\| f''(x) - \displaystyle \sum_{k = -\infty}^{\infty}f(kh)S''(k,h)(x) \right\| \leq \frac{2eN(f,2,D_{d})}{2 \pi \sinh(\frac{\pi d}{h})} \left( \frac{ \pi}{h} \right)^2.
\end{equation}

By inserting the same approximation for the step size, we arrive at:
\begin{equation}
\left\|f''(x) - \displaystyle \sum_{k = -\infty}^{\infty}f(kh)S''(k,h)(x) \right\| \leq \frac{eN(f,2,D_{d})}{ \pi } \left( \frac{ \tau^{\alpha}B}{d \gamma N} \right)^2 \frac{1}{\sinh \left( \frac{\tau^{\alpha}B}{\gamma N } \right)}.
\end{equation}

Simplifying and collecting constants, leads to:
\begin{equation}
\left\|f''(x) - \displaystyle \sum_{k = -\infty}^{\infty}f(kh)S''(k,h)(x)\right\| \leq \frac{eN(f,2,D_{d})B^2}{\pi (d \gamma N)^2} \tau^{2 \alpha} \frac{1}{ \sinh \left( \frac{\tau^{\alpha} B}{\gamma N} \right)}.
\end{equation}
So here the sampling error decays to 0 exponentially as $\tau$ increases.

As the sampling error is bounded below by 0, there will be a $\tau_{0}$ where it becomes negligible. For  $\tau > \tau_0$, the error in the approximation of the function will become dominated by the truncation error. However, as the truncation error grows as an algebraic power of $\tau$, indefinitely increasing $\tau$ will increase the total error bound. This is a contradiction to the assumption that we can indefinitely increase $\tau$ to decrease the error and gives the boundedness property of the interval.

\end{proof}

It is important to note that finding an expression for the upper bound of the interval requires an estimation of the contour integral $N(f,2,D_d)$ which has proven to be non-trivial analytically and numerically.

\section{Numerical discussion and results}
We use the DESCM to find energy eigenvalues of the anharmonic Coulomb potential. The coding language Matlab is used for the calculation. We define the relative error between known eigenvalues $E$  and numerical eigenvalues ${\mathcal E}$  as:
\begin{equation}
\textrm{Relative Error} = \frac{|E - {\mathcal E}|}{| E |}.
\end{equation}
When moving to higher order energy eigenvalues or potentials without known analytic solutions, we use the following formula to show convergence of the numerical eigenvalues:
\begin{equation}
\textrm{Relative Error Approximation} = \frac{|{\mathcal E}_i(n+1) - {\mathcal E}_i(n)|}{| {\mathcal E}_i(n+1) |},
\end{equation}
where ${\mathcal E}_i(n+1)$ denotes the $(n+1)$th approximation of the $ith$ energy eigenvalue.

We use the Lambert W function to find the mesh size $h$, as well as the transformations $\phi(x)$~\eqref{EQphix} and $\tilde{\phi}(x)$~\eqref{EQtildehix}, to transform the eigenvalue problem. To illustrate the convergence of our method, we compute the eigenvalues of potentials that have known analytic solutions~\cite{Lund1984,Chaudhuri1995a}. The potentials which we use are the followings:
\begin{equation}
\begin{array}{ccccccc} \sphline
V_1 & = & \frac{2}{x^2}-\frac{16}{x}+2x+\frac{x^2}{16} & \Longrightarrow & \mathbf{E}_0 & = & -\frac{59}{4} \\[0.25cm]
V_2 & = & \frac{6}{x^2}-\frac{24}{x}+2x+\frac{x^2}{16} & \Longrightarrow & \mathbf{E}_0 & = & -\frac{57}{4} \\[0.25cm]
V_3 & = & \frac{15}{4x^2}-\frac{20}{x}+2x+\frac{x^2}{16} & \Longrightarrow & \mathbf{E}_0 & = & -\frac{58}{4} \\[0.25cm]
V_4 & = & \frac{35}{4x^2}-\frac{28}{x}+2x+\frac{x^2}{16} & \Longrightarrow & \mathbf{E}_0 & = & -14 \\[0.25cm]
V_5 & = & \frac{2}{x^2} + x^2 & \Longrightarrow & \mathbf{E}_0 & = & 5 \\[0.25cm]
V_6 & = & \frac{3}{4x^2} + x^2 & \Longrightarrow & \mathbf{E}_0 & = & 4. \\\sphline
\end{array}
\end{equation}

The refined DESCM provides increased convergence speed when compared with the SESCM presented in \cite{Lund1984}. To compare the two methods, we implement the SESCM following the procedure in \cite{Lund1984}. The improved convergence speed offered by the refined DESCM is predicted in theoretical work done by Sugihara and others \cite{Mori2001a,Sugihara2002a,Tanaka2009}. As we have shown that the solution of the Schr\"{o}dinger equation is well suited to the DESCM, our results are remarkable. The convergence of both methods is plotted in Figure \ref{SingleDoubleComparison}. For the potentials $V_5$ and $V_6$, we utilize the same single exponential transformation and step size as proposed in \cite{Lund1984}. Moreover, we show that the refinements presented in this work also improve the convergence of the SESCM.

In Table \ref{EigenvalueTable}, we calculate the number of convergent eigenvalues in $100$ iterations for the potentials $V_1,V_2,V_3$ and $ V_4$.  For higher order eigenvalues where the analytic solution is not known, we use the relative error threshold of $5 \cdot 10^{-12}$. We consider a higher order eigenvalue to be found if the relative error approximation is within the acceptable error. Our choice of acceptable error is influenced by the accuracy of the eigensolvers in Matlab as well as the presence of round off error. In this table, the improvement resulting from utilizing the generalized transformation $\tilde{\phi}(x)$~\eqref{EQtildehix} and the introduction of the scaling factor is obvious.

In Table \ref{ConvergenceTable}, we present the evolution of the convergence for increasing matrix size for the potential $V_1$. We see convergence towards the known ground state eigenvalue as well as the convergence towards the first and second excited states. However, we would like to be able to compute arbitrarily many energy eigenvalues. This will require dealing with matrices of increasingly large size. Further, these matrices become more and more ill-conditioned as they grow. In fact, for the potential $V_1$, when using the transformation $\phi(x)$~\eqref{EQphix}, numerical blow ups occur for a $141 \times 141 $ matrix and higher. We plot the condition number of the generalized eigenvalue problem, and notice that the numerical blow ups occur as the condition number of the eigenvalue problem passes $10^{16}$. This increase in the condition number is to be expected as the energy eigenvalues of the system grow without bound. The scaling factor presents a simple way to improve stability of the method and is evidenced in Figure \ref{StabilityFigure2}. The Figure shows the improved convergence of the transformation $\tilde{\phi}(x)$ when compared with $\phi(x)$. We also see the vastly improved stability of the scaled transformation.

For the potentials in Figure \ref{Comparison}, we use a $201 \times 201 $ matrix and we show a further comparison of $\phi(x)$ and $\tilde{\phi}(x)$ for the potentials $V_1$, $V_2$, $V_3$ and $V_4$. We note the improved convergence speed and number of convergent eigenvalues.

\section{Conclusion}
In this paper, we apply the DESCM method to the Schr\"{o}dinger equation with an anharmonic Coulombic potential. This potential presents several numerical difficulties, including a singularity at $x = 0$. The DESCM proves to be a powerful choice for computing the energy eigenvalues and produces convergence towards known eigenvalues quickly. Further, we show that for the Coulombic potential, the double exponential transformation is the correct choice of transformation. Further, we introduced an improvement of the numerical stability as well as the convergence of the DESCM. The scaling factor that utilizes the symmetry of the eigenvalues is, to our knowledge, a novel suggestion that vastly improves stability and increases convergence. Our numerical results imply that the instability is due to the problem becoming ill-conditioned for large matrix sizes. Future work will include implementing preconditioning methods in the generalized eigenvalue problem as well as exploring other methods of increasing stability.

\section{Numerical tables and figures}
\clearpage

\begin{figure} [h!]
\begin{tabular} {l}
\hspace*{-0.85cm} \includegraphics[trim=0 150 0 150,clip,width=0.525\textwidth]{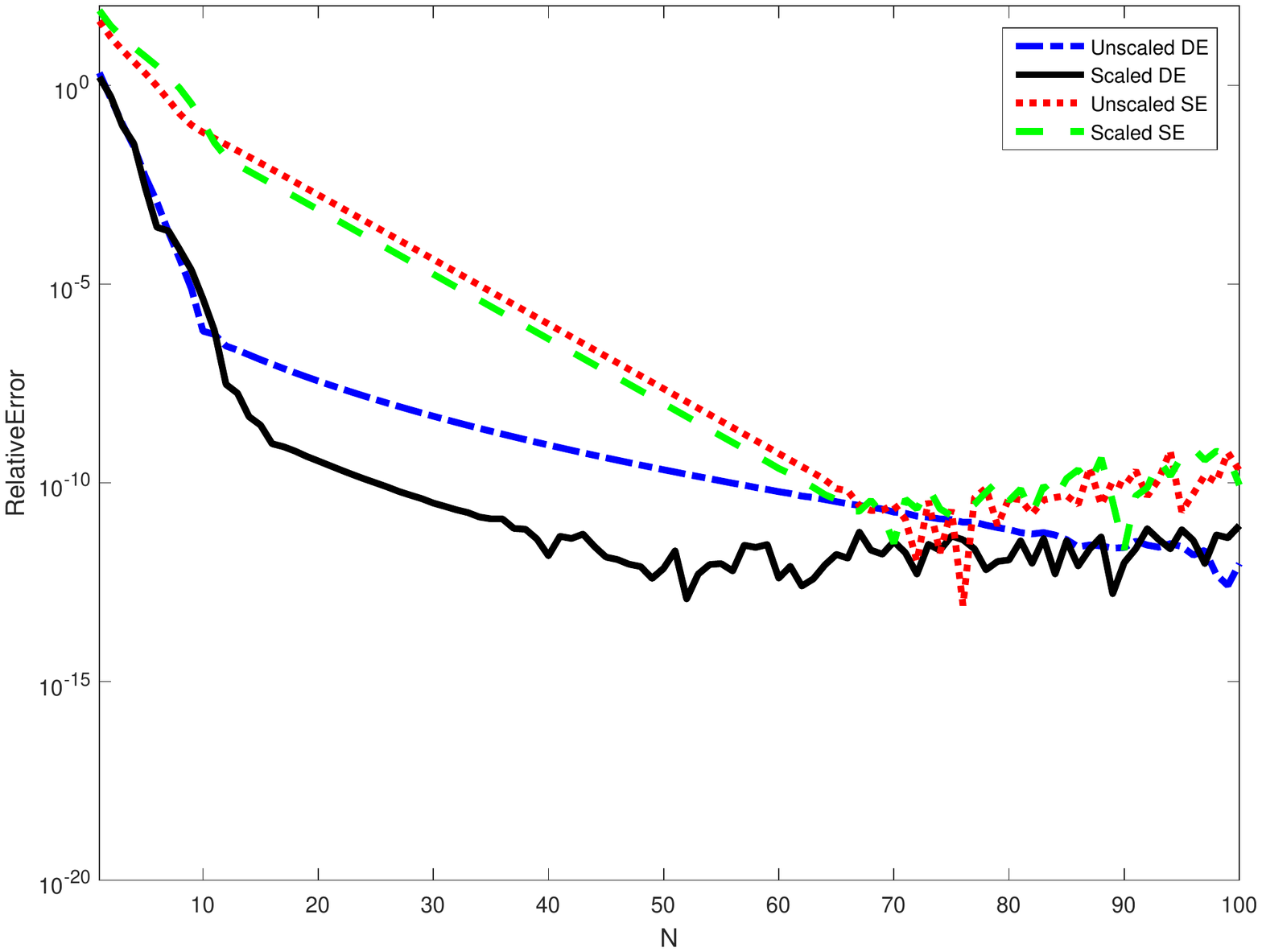}~\includegraphics[trim=0 150 0 150,clip,width=0.525\textwidth]{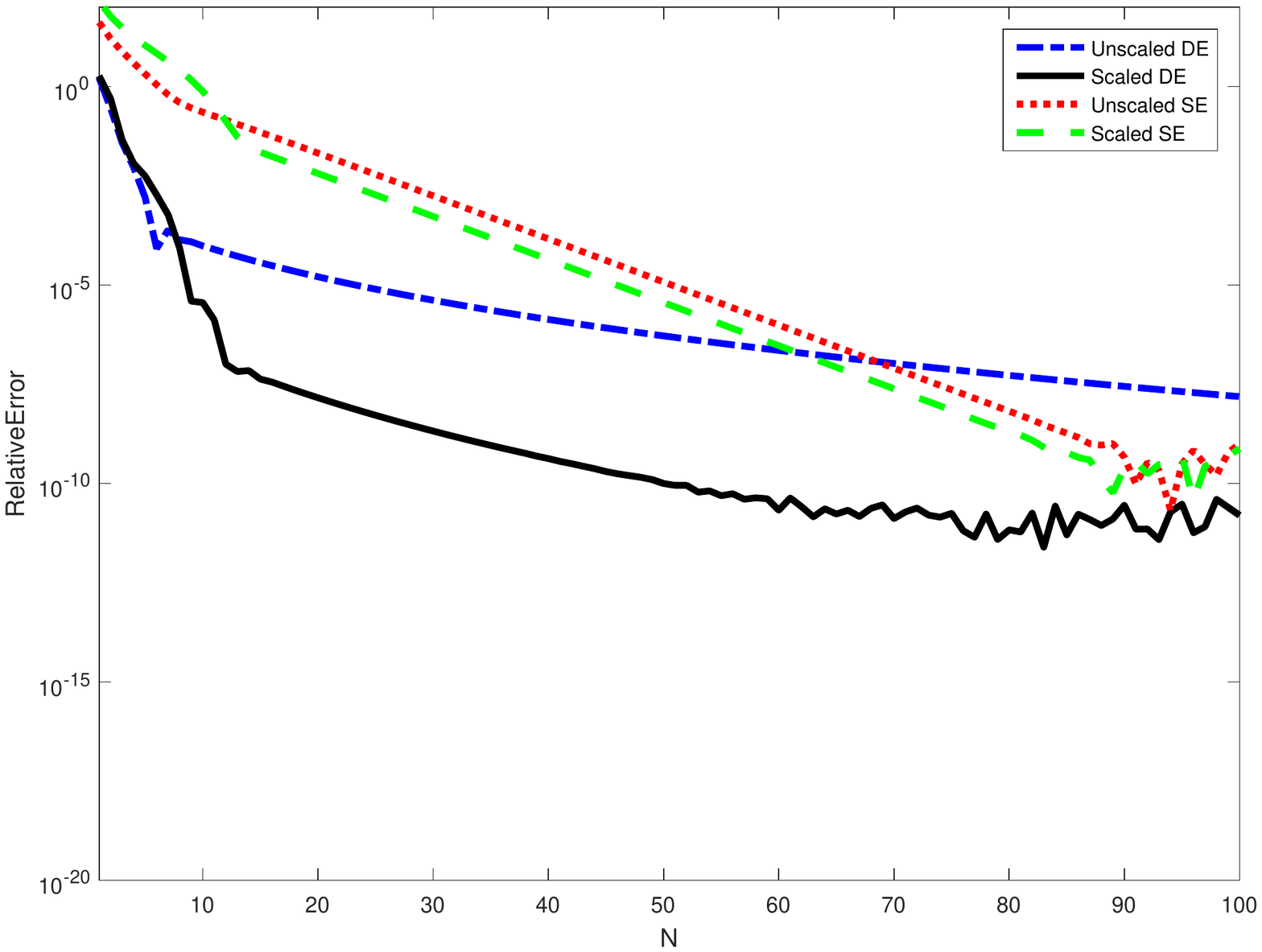} \\
[-0.5cm]
\hspace*{-1.25cm} ~~~~~~~~~~~~~~~~~~~~~~~~~~~~~~~~(a)~~~~~~~~~~~~~~~~~~~~~~~~~~~~~~~~~~~~~~~~~~~~~~~~~~~~~~~~~~~~~~(b)
\end{tabular}
\caption{
Comparison of the DESCM and the SESCM. (a) represents the convergence of the DESCM and the SESCM towards known eigenvalues of the potential $V_5$. (b) represents the convergence of the DESCM and the SESCM towards the eigenvalues of the potential $V_6$. The scaled plots correspond to the convergence diagrams using the scaling factor $\tau$. (a) uses a scaling factor of $\tau = 0.75$. (b) uses a scaling factor of $\tau = 0.55$.   }
\label{SingleDoubleComparison}
\end{figure}

\vspace{0.5cm}

\begin{table} [h!]
\caption{ The amount of convergent eigenvalues computed in 100 iterations for both transformations $\phi(x)$~\eqref{EQphix} and $\tilde{\phi}(x)$~\eqref{EQtildehix}. $\tau$  denotes the scaling factor used in the calculation.}
\begin{tabular*}{\hsize}{@{\extracolsep{\fill}}ccccc} \hline
Potential &  $\phi(x)$, $\tau = 1$ & $\phi(x)$, $\tau = 1.75$ & $\tilde{\phi}(x)$, $\tau = 1$   &  $\tilde{\phi}(x)$, $\tau = 1.75$	     \\\hline
$ V_1(x) $ & 8 & 22 & 22 & 36 \\
$ V_2(x)$ & 9 & 23 & 21 & 35\\
$ V_3(x) $ & 8 & 20 & 19 & 37 \\
$V_4(x) $ & 9 & 22 &  20 & 34 \\ \hline
\end{tabular*}
\label{EigenvalueTable}
\end{table}

\vspace{-0.25cm}
\begin{table}[h!]
\caption{ Numerical calculations for the ground states and first two excited states of the potential $ V_1(x)$ Here we used the potential $\tilde{\phi}(x)$~\eqref{EQtildehix} with the scaling factor $\tau = 1.00$. }
\begin{tabular*}{\hsize}{@{\extracolsep{\fill}}cccc}\sphline
  $N$ &      $\tilde{E}_{0}(N)$    &  $\tilde{E}_{1}(N)$        &      $\tilde{E}_{2}(N)$   \\\hline
10 & -14.7499998222764 & -4.09661939808125 & 1.13533983977096\\
15 & -14.7499999935935 & -4.09661597228020 & 1.13571953379622 \\
20 & -14.7499999989570 & -4.09661597504977 & 1.13571957570939 \\
25 & -14.7499999997938 & -4.09661597544138 & 1.13571957544272 \\
30 & -14.7499999999506 & -4.09661597551624 & 1.13571957539198 \\
35 & -14.7499999999867 & -4.09661597553405 & 1.13571957537878 \\
40 & -14.7499999999960 & -4.09661597553543 & 1.13571957537729 \\
45 & -14.7500000000008 & -4.09661597553923 & 1.13571957537739 \\
50 & -14.7499999999961 & -4.09661597554020 & 1.13571957537189 \\\hline
\end{tabular*}
\label{ConvergenceTable}
\end{table}

\clearpage

\begin{figure} [h!]
\begin{tabular} {l}
\hspace*{-1.25cm} \includegraphics[trim=0 150 0 150,clip,width=0.55\textwidth]{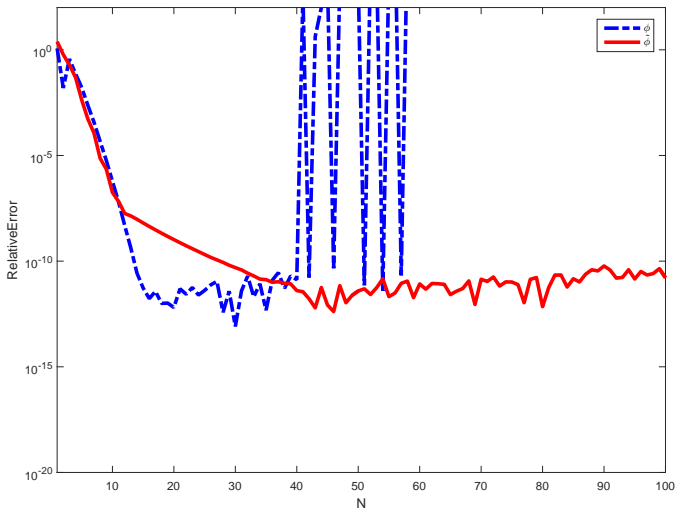} \includegraphics[trim=0 150 0 150,clip,width=0.55\textwidth]{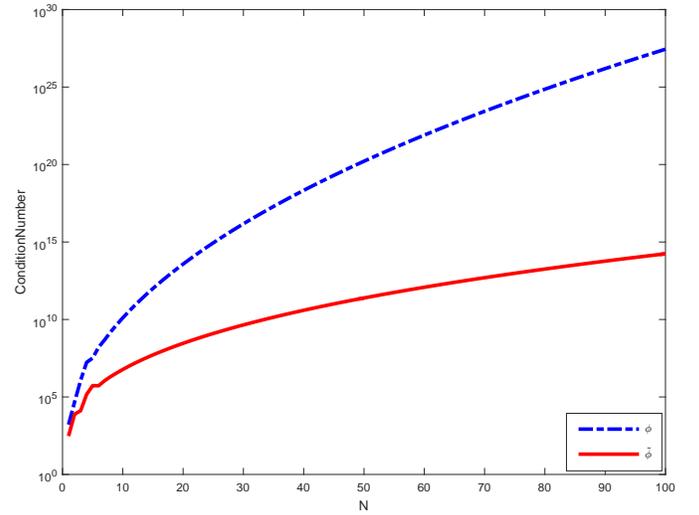}\\[-0.5cm]
\hspace*{-1.25cm} ~~~~~~~~~~~~~~~~~~~~~~~~~~~~~~~~(a)~~~~~~~~~~~~~~~~~~~~~~~~~~~~~~~~~~~~~~~~~~~~~~~~~~~~~~~~~~~~~~(b) \\
\hspace*{-1.50cm} \includegraphics[trim=0 150 0 150,clip,width=0.58\textwidth]{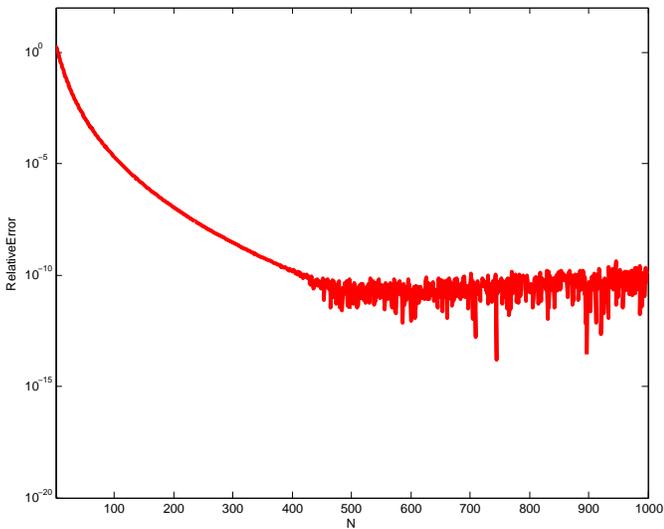} \hspace*{-0.35cm}\includegraphics[trim=0 150 0 150,clip,width=0.58\textwidth]{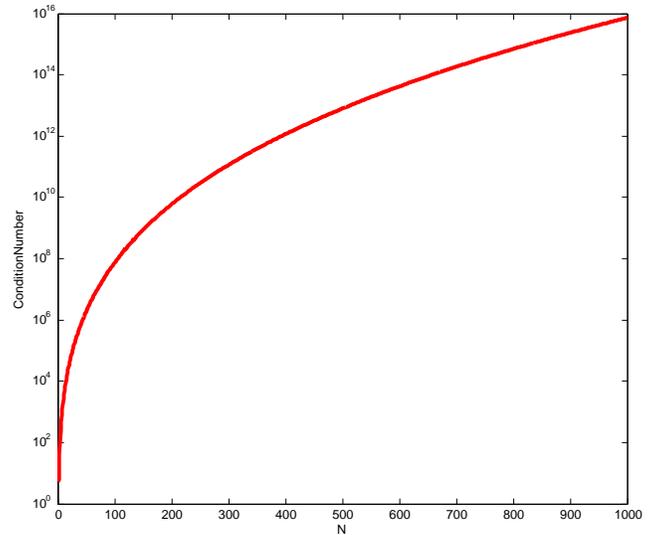} \\[-0.5cm]
\hspace*{-1.25cm} ~~~~~~~~~~~~~~~~~~~~~~~~~~~~~~~~(c)~~~~~~~~~~~~~~~~~~~~~~~~~~~~~~~~~~~~~~~~~~~~~~~~~~~~~~~~~~~~~~(d)
\end{tabular}
\caption{ The improved numerical stability of the DESCM for the potential $V_1$. (a) shows the convergence of the method with $\phi(x)$~\eqref{EQphix} and $\tilde{\phi}(x)$~\eqref{EQtildehix} over 100 iterations. (b) compares the condition numbers of the different transformation for the generalized eigenvalue problem. (c) shows the convergence and stability of the generalized transformation following the introduction of a scaling factor $\tau = 3.00$ over 1000 iterations. (d) shows the condition number of the scaled generalized transformation. }
\label{StabilityFigure2}
\end{figure}

\begin{figure} [h!]
\begin{tabular} {l}
\hspace*{-1.25cm} \includegraphics[trim=0 150 0 150,clip,width=0.55\textwidth]{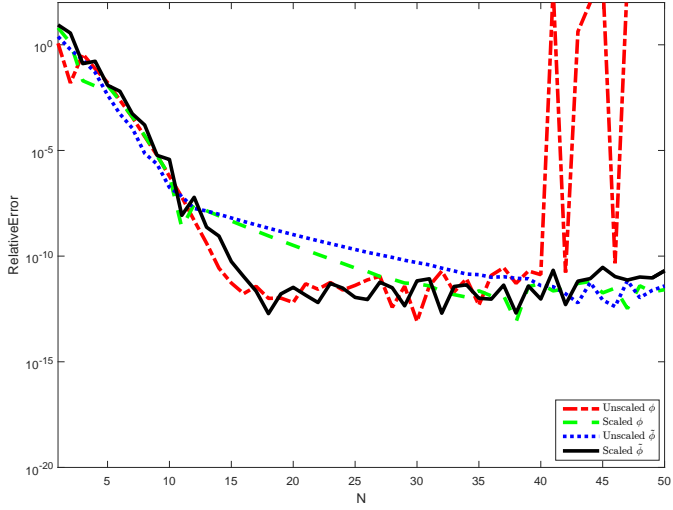}~\includegraphics[trim=0 150 0 150,clip,width=0.55\textwidth]{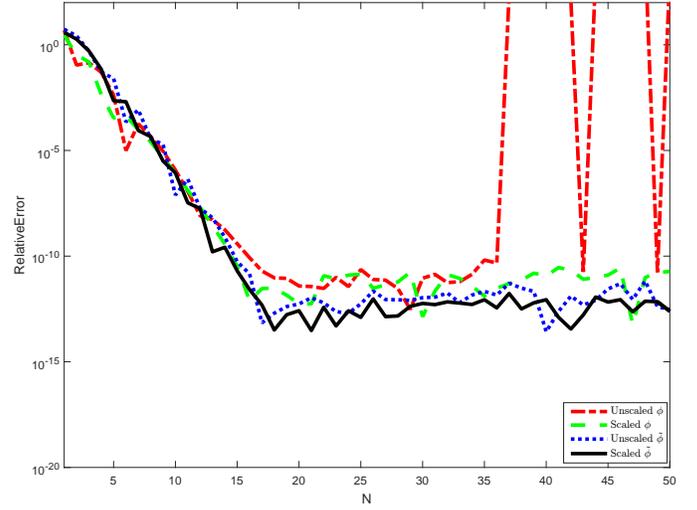} \\
[-0.5cm]
\hspace*{-1.25cm} ~~~~~~~~~~~~~~~~~~~~~~~~~~~~~~~~(a)~~~~~~~~~~~~~~~~~~~~~~~~~~~~~~~~~~~~~~~~~~~~~~~~~~~~~~~~~~~~~~(b) \\
\hspace*{-1.25cm} \includegraphics[trim=0 150 0 150,clip,width=0.55\textwidth]{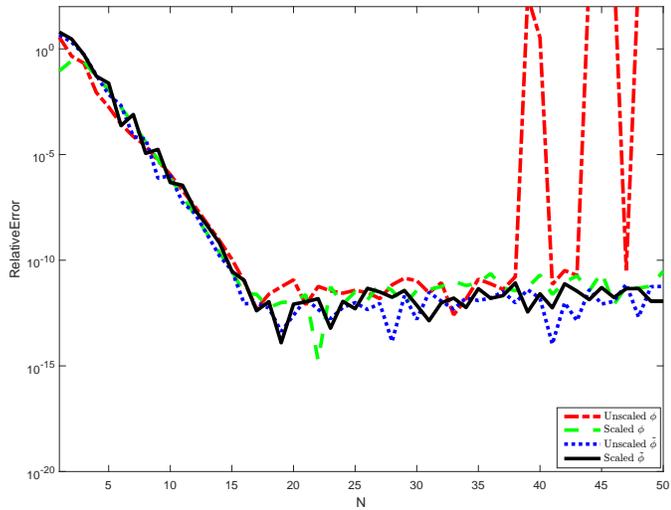}~\includegraphics[trim=0 150 0 150,clip,width=0.55\textwidth]{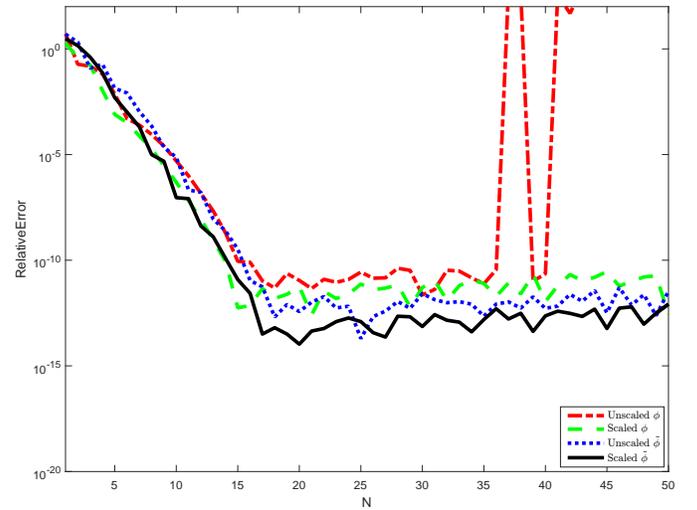}\\
[-0.5cm]
\hspace*{-1.25cm} ~~~~~~~~~~~~~~~~~~~~~~~~~~~~~~~~(c)~~~~~~~~~~~~~~~~~~~~~~~~~~~~~~~~~~~~~~~~~~~~~~~~~~~~~~~~~~~~~~(d)
\end{tabular}
\caption{ The convergence of the DESCM transform towards known ground state energy eigenvalues.  In all figures, we have $\phi(x)$~\eqref{EQphix} and $\tilde{\phi}(x)$~\eqref{EQtildehix}. The scaled plots correspond to the convergence diagrams using the scaling factor $\tau$. We chose the scaling factors, $\tau_1,\tau_2$, to increase convergence and avoid blow up. (a) is the potential $V_1(x)$ with $\tau_1 = 1.75,\tau_2 = 0.6$ for the scaled transformations. (b) is the potential $V_2(x)$ with $\tau_{1,2} = 1.20$ for the scaled transformation. (c) is the potential $V_3(x)$ with $\tau_1 = 1.30,\tau_2 = 0.85 $ for the scaled transformation. (d) is the potential $V_4(x)$ with $\tau_{1,2} = 1.45$ for the scaled transformation. }

\label{Comparison}
\end{figure}

\clearpage

\end{document}